\documentclass[11pt]{article}
\usepackage{HongLiu}

\title{Polynomial minimum degree stability for $3$-chromatic graphs}
\author{
Yisai Xue\thanks{School of Mathematics and Statistics, Ningbo University, Ningbo, China. Email: \texttt{xueyisai@nbu.edu.cn}. Supported by the National Natural Science Foundation of China (No. 12501486).}
}
\date{}
\begin{document}
\maketitle

\begin{abstract}
Let $H$ be a fixed 3-chromatic graph, and let $g \geq 2$ be the smallest integer such that there is no homomorphism $H \rightarrow C_{2 g+1}$. For every $\varepsilon>0$, we prove that there exists a constant $\rho>0$ such that every $H$-free $n$-vertex graph $G$ with $\delta(G) \geq(2 /(2 g+1)+\varepsilon) n$ can be made bipartite by deleting $O\left(n^{2-\rho}\right)$ edges. Thus the sharp qualitative minimum-degree stability theorem for 3-chromatic graphs admits a polynomial strengthening. In particular, this gives an affirmative answer to a question of Illingworth [\textit{Minimum degree stability of $H$-free graphs}, Combinatorica, 43(1):129-147, 2023.] on blow-ups of odd cycles.
\end{abstract}

\section{Introduction}

A central theme in extremal graph theory is to understand the structure of graphs which avoid a fixed subgraph.  The classical Erd\H{o}s-Stone-Simonovits Theorem \cite{ErdosStone1946,ErdosSimonovits1966} says that if $H$ has chromatic number $r+1$, then every $n$-vertex $H$-free graph has at most $\big(1-\frac{1}{r}+o(1)\big)\binom{n}{2}$ edges, and the corresponding stability theorem says that every nearly extremal $H$-free graph is close to being $r$-partite.

A minimum-degree version of this problem goes back to Andr\'asfai, Erd\H{o}s and S\'os \cite{AES74}, who proved that every $K_{r+1}$-free graph $G$ on $n$ vertices with $\delta(G)>\frac{3r-4}{3r-1}n$ is $r$-partite.  Alon and Sudakov \cite{alon2006h} extended this phenomenon to arbitrary fixed forbidden graphs: if $\chi(H)=r+1$, then every sufficiently large $H$-free graph with minimum degree at least $\big(\frac{3r-4}{3r-1}+\varepsilon\big)n$ can be made $r$-partite by deleting $O(n^{2-\rho})$ edges, for some $\rho=\rho(H)>0$.  Allen \cite{allen2010dense} later gave a regularity-free proof and obtained sharp estimates, up to a constant factor, in terms of a Zarankiewicz-type extremal function.

Recently, Illingworth \cite{illingworth2023minimum} initiated a systematic study of minimum-degree stability, asking for the minimum degree threshold above which every $H$-free graph must be close to $r$-partite.
Unlike the classical Tur\'an density, this threshold is not determined solely by the chromatic number of $H$. 
For 3-chromatic graphs, {\L}uczak and Simonovits \cite{luczak2008minimum} had earlier determined the corresponding qualitative bipartite-stability threshold in terms of an equivalent odd-cycle homomorphism parameter.
In Illingworth’s formulation, if $g$ is the smallest positive integer such that there is no homomorphism $H\to C_{2g+1}$, then the threshold is $2/(2g+1)$.

For $\ell\ge 3$ and $t\ge 1$, let $C_\ell[t]$ denote the \textit{$t$-blow-up} of $C_\ell$, that is, the graph obtained by replacing each vertex of $C_\ell$ by an independent set of size $t$ and each edge by a complete bipartite graph.
Illingworth \cite{illingworth2023minimum} showed that for every $g\ge 2$, $t\ge 1$ and $\varepsilon>0$, every sufficiently large $C_{2g-1}[t]$-free $n$-vertex graph $G$ with
$\delta(G)\ge \big(\frac{2}{2g+1}+\varepsilon\big)n$
can be made bipartite by deleting $o(n^2)$ edges. Furthermore, Illingworth asked whether this conclusion can be strengthened to a polynomial bound.

\begin{ques}[Question 16 in \cite{illingworth2023minimum}]
    For positive integers $g$ and $t$, and every $\varepsilon>0$, is there some $\rho>0$ such that every $n$-vertex graph with minimum degree at least $(2 /(2 g+1)+\varepsilon) n$ either contains $C_{2 g-1}[t]$ or can be made bipartite by deleting $O(n^{2-\rho})$ edges?
\end{ques}

We answer this question affirmatively and, in fact, obtain the polynomial strengthening for
every fixed 3-chromatic forbidden graph. For a graph $G$, define
$$
\gamma_2(G)=\min_{V(G)=X\cup Y}\bigl(e(G[X])+e(G[Y])\bigr).
$$
Thus $\gamma_2(G)$ is the minimum number of edges that must be deleted from $G$ to make it bipartite.  
For fixed $g\ge2$, write $\theta_g=\frac{2}{2g+1}$.

\begin{theorem}\label{thm:3-chromatic}
    Let $H$ be a fixed graph with $\chi(H)=3$, and let $g \geq 2$ be the smallest integer such that $H \nrightarrow C_{2 g+1}$. For every $\varepsilon>0$, there exist constants $C>0, \rho>0$, and $n_0$ such that every $H$-free graph $G$ on $n \geq n_0$ vertices with
$\delta(G) \geq\left(\theta_g+\varepsilon\right) n$
satisfies $\gamma_2(G) \leq C n^{2-\rho}$.
\end{theorem}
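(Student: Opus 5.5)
We aim for a regularity-free argument in the spirit of Allen: we count copies of short odd cycles and their blow-ups, and use supersaturation (Kővári--Sós--Turán / Erdős-type hypergraph counting) to convert ``many homomorphic copies'' into a genuine copy of $H$. Since $H\nrightarrow C_{2g+1}$ but $\chi(H)=3$, $H$ maps homomorphically into $C_{2k+1}$ for all large $k$. The minimal $g$ gives $H\to C_{2g-1}$, so $H\subseteq C_{2g-1}[t]$ for $t=|V(H)|$. It therefore suffices to prove the theorem with $H=C_{2g-1}[t]$ (this is exactly Illingworth's question). Indeed, $H$-free implies $C_{2g-1}[t]$-free, and the degree threshold is the same.

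\textbf{Step 1 (sparse short odd cycles).} Fix $G$ that is $C_{2g-1}[t]$-free. By the standard supersaturation argument for blow-ups (Erdős' hypergraph Kővári--Sós--Turán, applied to the $(2g-1)$-uniform hypergraph of homomorphic copies of $C_{2g-1}$), $G$ contains at most $n^{2g-1-\rho_1}$ homomorphic copies of $C_{2\ell+1}$ for each $\ell\le g-1$, with $\rho_1=\rho_1(g,t)>0$. The last point holds because $C_{2\ell+1}\to C_{2g-1}$ for $\ell\le g-1$, and a blow-up of $C_{2\ell+1}$ contains $C_{2g-1}[t]$. Call an edge \emph{bad} if it lies in at least $n^{2\ell-\rho_1/2}$ copies of $C_{2\ell+1}$ for some $\ell\le g-1$. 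Then there are only $O(n^{2-\rho_1/2})$ bad edges. Delete them to get $G'$ with $\delta(G')\ge(\theta_g+\varepsilon/2)n$ for all but few vertices; low-degree vertices can be handled by an averaging/cleaning step, at a cost of $O(n^{2-\rho})$ further edges. In $G'$, every edge lies in $o(n^{2\ell})$ short odd cycles.

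\textbf{Step 2 (structure from the degree condition).} Now mimic the Łuczak--Simonovits / Illingworth argument quantitatively. Pick a vertex $v$ and consider the neighbourhood layers $N_1(v),N_2(v),\dots$ up to distance $g$. Because edges lie in few short odd cycles, $N_i(v)$ spans only $O(n^{2-\rho})$ edges for $i<g$. More precisely, an edge inside $N_i(v)$ closes an odd closed walk of length $2i+1$ through $v$. Summing over $v$, with multiplicities counted via the number of walks, shows that a typical $v$ has $e(N_i(v))\le n^{2-\rho}$ and also few edges between $N_i$ and $N_{i}$-type parity classes. The degree bound $\delta>\frac{2}{2g+1}n$ then forces the odd-girth obstruction: following Andrásfai-type counting, if the layered partition $X=\bigcup_{i \text{ even}}N_i(v)$, $Y=\bigcup_{i\text{ odd}}N_i(v)$ had $\ge n^{2-\rho}$ internal edges, we could find, through a greedy/averaging count, $\Omega(n^{2g-1})$ homomorphic copies of $C_{2g-1}$ or shorter odd cycles. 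This would contradict Step 1. The counting uses that each vertex has $>\frac{2n}{2g+1}$ neighbours, so $2g+1$ disjoint-ish neighbourhoods along an odd closed walk of length $\le 2g+1$ cannot fit, which produces robust overlaps and hence many short odd cycles.

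\textbf{Main obstacle.} The hard part is Step 2: making the qualitative ``$\delta>\theta_g n$ plus few short odd cycles implies near-bipartite'' argument quantitative with polynomial losses. The existing proofs use regularity, which only yields $o(n^2)$. The first thing I would try is an inductive robust version: define $\Gamma$ as the graph of pairs joined by $\ge \eta n^{2j-1}$ walks of length $2j$, for $j<g$, with $\eta=n^{-\rho'}$. I would show that $\Gamma$ essentially inherits the minimum-degree condition. I would then run Illingworth's odd-cycle homomorphism argument on these weighted walk-neighbourhoods, losing only polynomial factors at each of the $g$ stages. Finally, I would choose $\rho$ small relative to $\rho_1$ and $\varepsilon$.
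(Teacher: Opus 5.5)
\paragraph{Verdict: genuine gap in Step~2.}
Your reduction to $H=C_{2g-1}[t]$ agrees with the paper, and so does your Step~1, which is the paper's upper bound on short odd cycles (the paper uses Alon--Shikhelman). There are some slips there. Homomorphisms into odd cycles are downward closed, so $H\to C_{2k+1}$ holds for $k\le g-1$, not for all large $k$. The count of $C_{2\ell+1}$ should be $n^{2\ell+1-\rho_1}$, and the bad-edge threshold $n^{2\ell-1-\rho_1/2}$, since a fixed edge lies in at most $n^{2\ell-1}$ such cycles.

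\paragraph{What is missing.}
Step~2 carries the whole difficulty, and you leave it as a programme. You need supersaturation with polynomial dependence: $\delta(G)\ge(\theta_g+\varepsilon)n$ and $\gamma_2(G)=\mu n^2$ should force $N_{2g-1}(G)\ge \mu^{O(1)}n^{2g-1}$. Your sketch does not give this.
\begin{itemize}
\item The claim it rests on is false. Take $K_{n/2,n/2}$ and add a $K_{s,s}$ with $s=n^{1-\rho/2}$ inside one side. This graph satisfies the degree condition and has $\gamma_2=\Theta(n^{2-\rho})$. So every bipartition, including your layered one, has at least $n^{2-\rho}$ internal edges. Yet every odd closed walk must use an added edge, so there are only $O(s^2n^{2\ell-1})=O(n^{2\ell+1-\rho})$ homomorphic copies of each $C_{2\ell+1}$, not $\Omega(n^{2g-1})$.
\item The layer argument is not robust. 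An edge inside $N_i(v)$ certifies only one closed $(2i+1)$-walk through $v$. For $i\ge 2$, a per-edge bound on short odd cycles therefore gives no control on how many $v$ see that edge inside $N_i(v)$.
\item The weighted walk-graph $\Gamma$ might be made to work, but nothing in the proposal shows the losses stay polynomial over the $g$ stages.
\end{itemize}

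\paragraph{How the paper does it.}
The missing idea is to obtain supersaturation by sampling rather than by a quantitative structural analysis.
\begin{itemize}
\item Take a uniformly random set $S$ of $q=\lceil K\mu^{-5}\rceil$ vertices.
\item By the Gishboliner--Shapira sampling lemma, $\gamma_2(G[S])\ge a\mu q^2/2$ with probability at least $2/3$.
\item A hypergeometric Chernoff bound gives $\delta(G[S])\ge(\theta_g+\varepsilon/2)q$ with probability at least $5/6$.
\item Splitting $G[S]$ at cut vertices has bounded depth, because each split-off piece has more than $\theta_g q$ vertices. This produces a $2$-connected non-bipartite piece whose minimum degree is above $\theta_g$ times its order.
\item H\"aggkvist's theorem then puts a $C_{2g-1}$ inside that piece.
\item Hence $\frac12\le N_{2g-1}(G)(q/n)^{2g-1}$, that is, $N_{2g-1}(G)\ge A\mu^{5(2g-1)}n^{2g-1}$. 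Comparing with your Step~1 bound gives $\mu=O(n^{-\alpha/(5(2g-1))})$.
\end{itemize}
Two features are essential here. The theorem applied inside the sample must be \emph{exact}, with no $o(q^2)$ error term. The sample size must be polynomial in $1/\mu$. A regularity-based stability theorem used inside the sample would make $q$ tower-type in $1/\mu$ and lose the polynomial bound.
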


We shall deduce Theorem \ref{thm:3-chromatic} from the following special case, which is the form proved in the remainder of the paper.

\begin{theorem}\label{thm:main}
For every $g\ge 2$, $t\ge 1$, and $\varepsilon>0$, there exist constants $C>0$, $\rho>0$, and $n_0$ such that the following holds for every $n\ge n_0$.  Let $G$ be an $n$-vertex graph satisfying
$$
\delta(G)\ge \left(\theta_g+\varepsilon\right)n.
$$
Then either $G$ contains a copy of $C_{2g-1}[t]$, or $\gamma_2(G)\le Cn^{2-\rho}$.
\end{theorem}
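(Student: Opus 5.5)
The plan is to combine a supersaturation/counting argument for short odd cycles with a robust structural step that converts ``few short odd cycles'' into ``close to bipartite'' with polynomial error. Fix $g,t,\varepsilon$ and suppose $G$ is $C_{2g-1}[t]$-free with $\delta(G)\ge(\theta_g+\varepsilon)n$.

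\textbf{Step 1: few short odd cycles.} We use the Erd\H{o}s hypergraph-blow-up (K\H{o}v\'ari--S\'os--Tur\'an type) theorem. Regard copies of $C_{2g-1}$ as edges of a $(2g-1)$-partite $(2g-1)$-uniform hypergraph on vertex classes obtained by taking a random or ordered partition. Since $C_{2g-1}[t]$ is exactly the complete $(2g-1)$-partite hypergraph $K^{(2g-1)}(t,\dots,t)$ inside this family, Erd\H{o}s's theorem gives that $G$ contains at most $n^{2g-1-\beta}$ copies of $C_{2g-1}$ for some $\beta=\beta(g,t)>0$. The same argument bounds the number of copies of $C_{2k+1}$ for every $k<g$ by $n^{2k+1-\beta}$: a $C_{2k+1}$ with $k\le g-1$ together with edges can be padded (walk back and forth on an edge) to a homomorphic image of $C_{2g-1}$. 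More precisely, I would count homomorphic copies of $C_{2g-1}$ and note that each $C_{2k+1}$ yields $\Omega(n^{2g-2k-2})$ such closed walks using high minimum degree, so the bound transfers.

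\textbf{Step 2: removal with polynomial loss.} Using a polynomial deletion lemma (in the style of Alon--Sudakov/Allen rather than regularity), I would delete $O(n^{2-\rho})$ edges to obtain $G'$ in which every edge lies in at most $n^{2g-3-\rho'}$ copies of $C_{2k+1}$ for each $k<g$. This is done greedily: edges in too many short odd cycles number at most $O(n^{2-\beta+\rho'})$ by double counting. Vertices losing much degree are few, and they can be handled by moving them to whichever side they have fewer neighbours on, which costs only $O(n^{2-\rho})$ edges. Thus $\delta(G')\ge(\theta_g+\varepsilon/2)n$ on the core.

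\textbf{Step 3: structure, which is the main obstacle.} We must show that a graph with $\delta\ge(\theta_g+\varepsilon/2)n$ in which every edge is in few ($o$-polynomially) odd cycles of length $\le 2g-1$ has $\gamma_2=O(n^{2-\rho})$. The plan is a quantitative \L uczak--Simonovits/Illingworth argument. Take a vertex $v$ and define distance-layer sets $N_i(v)$ in a robust sense, counting walks rather than paths. The minimum degree condition forces any odd cycle of length $\ge 2g+1$ that is ``dense'' (in walk counts) to close up into shorter odd walks, because $2g+1$ neighbourhoods of size $>\frac{2}{2g+1}n$ must overlap. Concretely, I would take the bipartition $X=$ vertices with many even walks of length $2g$ from a typical $v$, $Y=$ odd. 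I would then show that every edge inside $X$ or $Y$ through which $\Omega(n^{2g-1-\rho})$ odd closed walks of length $\le 2g-1$ pass contradicts Step 2. The remaining edges inside parts are those contributing few walks, and averaging shows there are $O(n^{2-\rho})$ of them. The hard part is making the ``overlap'' argument robust. Instead of showing that two neighbourhoods intersect, one needs the intersection to have linear size, so that closed walks counts are $\Omega(n^{\text{length}-1})$ per bad edge. The slack $\varepsilon$ in the minimum degree is what will supply these linear overlaps. The polynomial bound then follows since all losses are $n^{-\rho}$ factors.
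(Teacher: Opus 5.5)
\paragraph{Verdict: genuine gap in Step~3.}
Your Step~1 is fine. It is exactly the upper bound the paper uses: Alon--Shikhelman, which is proved by the partite-hypergraph Erd\H{o}s argument you describe. The genuine gap is Step~3, which you yourself call the main obstacle. It carries the entire content of the theorem, and your sketch does not supply a working mechanism. What is missing is a \emph{polynomial supersaturation} statement: if $\delta(G)\ge(\theta_g+\varepsilon)n$ and $\gamma_2(G)=\mu n^2$, then $G$ contains at least $\mathrm{poly}(\mu)\,n^{2g-1}$ copies of $C_{2g-1}$. Your outline does not produce this, for three reasons.
\begin{itemize}
\item The ``even'' versus ``odd'' partition relative to a root $v$ is not well defined as written. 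Every walk of length $2g$ is even. In a non-bipartite component every vertex is reached from $v$ by walks of both parities. And $G$ may have several components that one root never sees.
\item The Andr\'asfai-type overlap argument (a vertex has at most two neighbours on a \emph{shortest} odd cycle $C$, so $|C|\,\delta(G)\le 2n$) only shows that \emph{one} odd cycle of length at most $2g-1$ exists. It gives no lower bound on the number of short odd cycles through a prescribed edge inside a part, and you offer no robust version of it.
\item The closing claim that ``averaging shows there are $O(n^{2-\rho})$'' such edges is just the theorem restated.
\end{itemize}
Note also that without a degree hypothesis the removal lemma for odd cycles is not polynomial (Alon). So the degree condition has to enter the counting itself, not merely serve to locate a single short cycle.

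\paragraph{How the paper fills it.}
The paper gets supersaturation by random sampling rather than by a structural argument on $G$.
\begin{itemize}
\item By the Gishboliner--Shapira lemma, a uniformly random $q$-set $S$ with $q=\Theta(\mu^{-5})$ has $\gamma_2(G[S])=\Omega(\mu q^2)$ with probability at least $2/3$.
\item A hypergeometric Chernoff bound plus Markov gives $\delta(G[S])\ge(\theta_g+\varepsilon/2)q$ with probability at least $5/6$.
\item On such a sample the argument is deterministic. Split repeatedly at cut vertices. The depth stays at most $g$, since otherwise $g+1$ disjoint sibling components of size $>\theta_g q$ would not fit. Hence there are at most $g$ terminal pieces and at most $g$ splits, costing at most $gq<\gamma_2(G[S])$ edges.
\item So some terminal piece $B$ is $2$-connected and non-bipartite with $\delta(B)>\theta_g|B|$, and H\"aggkvist's theorem gives $C_{2g-1}\subseteq B$.
\item Thus $\Pr(C_{2g-1}\subseteq G[S])\ge 1/2$. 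The union bound yields $N_{2g-1}(G)\ge\frac12(n/q)^{2g-1}=\Omega(\mu^{5(2g-1)})\,n^{2g-1}$. Comparing with $n^{2g-1-\alpha}$ gives $\gamma_2(G)=O(n^{2-\alpha/(5(2g-1))})$.
\end{itemize}
With this in hand your Step~2 is unnecessary.

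\paragraph{Two smaller errors.}
\begin{itemize}
\item The padding argument in Step~1 fails. Erd\H{o}s's theorem bounds injective partite copies, not closed walks; back-and-forth padding produces degenerate walks, and only about $n^{g-k-1}$ of them per cycle rather than $n^{2g-2k-2}$. The claim itself is still true, and follows directly from Alon--Shikhelman: since $C_{2g-1}\to C_{2k+1}$, the graph $C_{2g-1}[t]$ sits inside a blow-up of $C_{2k+1}$.
\item In Step~2, the per-edge threshold for $C_{2k+1}$ should be $n^{2k-1-\rho'}$, not $n^{2g-3-\rho'}$.
\end{itemize}
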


\begin{proof}[Proof of Theorem \ref{thm:3-chromatic} assuming Theorem \ref{thm:main}.]
     By the minimality of $g$, there is a homomorphism $\varphi: H \rightarrow C_{2 g-1}$. Since $H$ is fixed, it is a subgraph of $C_{2 g-1}[t]$ for some fixed $t$.
     Thus every $H$-free graph is $C_{2 g-1}[t]$-free, and Theorem \ref{thm:main} gives the result.
\end{proof}

The threshold in Theorem~\ref{thm:3-chromatic} is best possible. Indeed, the balanced blow-up of $C_{2 g+1}$ is $H$-free, since otherwise the natural homomorphism from this blow-up to $C_{2 g+1}$ would yield $H \rightarrow C_{2 g+1}$. It has minimum degree asymptotic to $\theta_g n$ and distance $\Omega(n^2)$ from being bipartite. Thus the coefficient $2 /(2 g+1)$ cannot be decreased.

\paragraph{Proof overview.}
It remains to prove Theorem \ref{thm:main}.
Let $\ell=2g-1$ and write $\gamma_2(G)=\mu n^2$.  We first show that, unless $\mu$ is already polynomially small, a random induced subgraph $G[S]$ on $q=\Omega(\mu^{-5})$ vertices inherits both a positive proportion of the distance from bipartiteness and the minimum-degree condition.  A reduction to the $2$-connected non-bipartite case, together with H\"aggkvist's theorem, then implies that such a random set $S$ contains a copy of $C_\ell$ with positive probability.  It follows that $G$ contains many copies of $C_\ell$.

On the other hand, if $G$ is $C_\ell[t]$-free, then the theorem of Alon and Shikhelman implies the number of copies of $C_\ell$ is at most $n^{\ell-\alpha}$ for some $\alpha>0$.  Comparing this upper bound with the lower bound obtained from random sampling forces $\gamma_2(G)=O(n^{2-\rho})$, as required.

\paragraph{Notation.}
All graphs in this paper are finite and simple.  For a graph $G$, we write
$V(G)$ and $E(G)$ for its vertex set and edge set, respectively, and let $v(G)=|V(G)|$ and $e(G)=|E(G)|$.  
For a vertex $x\in V(G)$, let $d_G(x)$ denote the degree of $x$ in $G$, and let $\delta(G)$ denote the minimum degree of $G$.
For an integer $\ell$, let $C_\ell$ be the cycle of length $\ell$.
For $X\subseteq V(G)$, let $G[X]$ be the subgraph induced by $X$. For $X,Y\subseteq V(G)$, let $e_G(X,Y)$ denote the number of edges
of $G$ with one endpoint in $X$ and the other in $Y$.

\section{Reduction to the two-connected non-bipartite case}

In this section we reduce the use of H\"aggkvist's theorem to the
2-connected non-bipartite case.

\begin{lemma}[H\"aggkvist, \cite{haggkvist1982odd}]\label{lem:haggkvist}
    Let $G$ be an $n$-vertex $2$-connected non-bipartite graph with
$\delta(G) > \lfloor \frac{2n}{2g+1} \rfloor$
and
$n > \binom{g+1}{2}(2g+1)(3g-1)$.
Then $G$ contains a copy of $C_{2g-1}$.
\end{lemma}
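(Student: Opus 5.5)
Since Lemma~\ref{lem:haggkvist} is quoted from \cite{haggkvist1982odd}, the paper may simply cite it. If I were to prove it, I would work with short odd cycles and use the minimum degree to control how outside vertices attach to them. Write $\delta=\delta(G)>2n/(2g+1)$. The plan has two stages: first show that $G$ has an odd cycle of length at most $2g-1$, then show that the length can be pushed up to exactly $2g-1$.

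\emph{Step 1 (a short odd cycle).} Since $G$ is non-bipartite, let $C=c_0c_1\dots c_{2k}$ be a shortest odd cycle, and assume $2k+1\ge 5$ (the triangle case is handled within Step 2). By minimality, $C$ has no chords. A vertex $x\notin C$ has at most two neighbours on $C$, since three neighbours would split $C$ into arcs one of which closes a shorter odd cycle through $x$. Moreover, if $x$ has two neighbours on $C$, they are at distance exactly $2$ along $C$. Double counting the edges between $C$ and $V(G)$ gives $(2k+1)\delta\le 2n$. Hence $2k+1\le 2n/\delta<2g+1$, so $2k+1\le 2g-1$.

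\emph{Step 2 (lengthening).} Suppose $2k+1<2g-1$ is the length of a shortest odd cycle $C$. I would aim to construct an odd cycle of length exactly $2k+3$, and then iterate. (The iteration needs care, because the new cycle is no longer shortest. I would instead run the argument on an odd cycle of length $2k+1$ chosen to maximise the number of vertices with two neighbours on it.)

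The counting in Step 1 now has slack: $(2k+1)\delta>2n\cdot\frac{2k+1}{2g+1}$, and the surplus is a positive fraction of $n$. So there are many vertices $x$ with two neighbours $c_{i-1},c_{i+1}$ on $C$. For each such $x$, the cycle obtained by rerouting $c_{i-1}c_ic_{i+1}$ through $x$ is again a shortest odd cycle. The goal is two such ``detour'' vertices $x\sim c_{i-1},c_{i+1}$ and $y\sim c_{j-1},c_{j+1}$ joined by a short path disjoint from $C$, or a single detour vertex $x$ with a neighbour $z\notin C$ that also sees $c_i$. Either configuration replaces an arc of length $2$ by a path of length $4$, giving length $2k+3$. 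To find it, I would pigeonhole over the $2k+1\le 2g-3$ positions. The classes $X_i$ of detour vertices at $c_i$ are large. Their neighbourhoods each have size $>2n/(2g+1)$, so they must overlap or send edges between classes, because the sets $N(c_i)\cup X_i$ cannot be disjoint. Here the hypothesis $n>\binom{g+1}{2}(2g+1)(3g-1)$ absorbs the additive error terms coming from the cycle vertices themselves.

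\emph{Main obstacle.} The hardest part is ruling out the degenerate configuration in which these overlaps only ever create cycles of the same length or of shorter odd length. In that configuration the graph looks locally like a blow-up of $C_{2k+1}$ with pendant structure attached. This is exactly where $2$-connectivity must enter. Without it, one can glue a blow-up of a short odd cycle to a dense bipartite piece at a cut vertex, keeping the degree high while avoiding $C_{2g-1}$. So I would argue as follows. If every detour extension fails, the vertices seeing $C$ form a near-blow-up of $C_{2k+1}$ that is closed under neighbourhoods. Because its size is bounded below via $\delta$, the remaining vertices could attach to it only through at most one vertex. This contradicts $2$-connectivity, unless the near-blow-up is all of $G$. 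In that case its minimum degree is at most about $2n/(2k+1)$ in the worst class, which contradicts $2k+1<2g-1$ and $\delta>2n/(2g+1)$. Making this structural dichotomy precise is where I expect most of the work.
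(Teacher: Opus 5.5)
The paper does not prove this lemma. It quotes it from H\"aggkvist and uses it as a black box, so citing it is all that is required. Judged as a proof, however, your proposal has a genuine gap. Step~1 is correct: it is the standard Andr\'asfai--Erd\H{o}s--S\'os-type count, and it does not even use $2$-connectivity. Step~2 carries the whole content of the theorem, and its first concrete claim is false. The degree count gives only $n_2\ge(2k+1)\delta-n-(2k+1)$ for the number $n_2$ of detour vertices. This is positive only when roughly $\delta>n/(2k+1)$, and $\delta>2n/(2g+1)$ does not imply that once $2(2k+1)<2g+1$.

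Here is a concrete counterexample. Take $K_{m,m}$ with parts $U,W$ and add a $5$-cycle $c_0c_1c_2c_3c_4$. Join $c_0,c_2,c_4$ to disjoint thirds $U_0,U_2,U_4$ of $U$, and join $c_1,c_3$ to disjoint halves $W_1,W_3$ of $W$. Then:
\begin{itemize}
\item the graph is $2$-connected;
\item it is non-bipartite, since $G-c_4c_0$ is bipartite and so every odd cycle uses $c_4c_0$;
\item its shortest odd cycle has length $5$, because $c_0$ and $c_4$ have no common neighbour in $G-c_0c_4$;
\item $\delta\approx n/6>2n/(2g+1)$ for $g\ge 6$.
\end{itemize}
Yet no vertex off the $5$-cycle is adjacent to two of its vertices at distance $2$. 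So there are no detour vertices at all, and pigeonholing over the classes $X_i$ has nothing to work with. The copy of $C_{2g-1}$ here is $c_4c_0$ together with a path of length $2g-2$ from $c_0$ into $U_0$, alternating through $W$ and $U$, and ending via $U_4$ at $c_4$. It lives in the bipartite part, far from $C$. A correct argument must find this kind of global rerouting, and that is exactly where $2$-connectivity enters.

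The rest of Step~2 does not close the gap either.
\begin{itemize}
\item Once you have a $(2k+3)$-cycle, it is no longer a shortest odd cycle, so you lose chordlessness and the ``at most two neighbours on $C$'' property. Your parenthetical fix (maximising detour vertices over $(2k+1)$-cycles) says nothing about passing from $2k+3$ to $2k+5$.
\item Your endgame inequality runs the wrong way. If the near-blow-up of $C_{2k+1}$ is all of $G$, its minimum degree, about $2n/(2k+1)$, is larger than $2n/(2g+1)$ because $k<g$. So there is no contradiction: a balanced blow-up of $C_{2k+1}$ satisfies every hypothesis. In that case you would have to exhibit $C_{2g-1}$ directly.
\item The structural dichotomy itself, which is the key step, is explicitly left unproved.
\end{itemize}
As it stands, the proposal establishes only that $G$ contains an odd cycle of length at most $2g-1$, not a copy of $C_{2g-1}$.
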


\begin{lemma}
\label{lem:two-connected-reduction}
Fix $g\ge 2$ and $\eta>0$.
Let $N_0(g,\eta)=\max\big\{
  \frac{g+1}{\eta},
  \binom{g+1}{2}(2g+1)^2(3g-1)
  \big\}$
and $G$ be an $n$-vertex graph with $n>N_0$. Suppose that
\[
  \delta(G)\ge (\theta_g+\eta)n \quad\text{and}\quad \gamma_2(G)>(g+1)n.
\]
Then $G$ contains a copy of $C_{2g-1}$.
\end{lemma}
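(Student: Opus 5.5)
Since $\gamma_2(G)>(g+1)n$ is far from bipartite but we only need a block structure, the plan is to pass to a suitable $2$-connected non-bipartite piece of $G$ and apply Lemma~\ref{lem:haggkvist} there. Decompose $G$ into its blocks (maximal $2$-connected subgraphs and bridges). Every cycle of $G$ lies inside a single block, and a graph is bipartite iff each of its blocks is. More quantitatively, I will use that $\gamma_2$ is additive over blocks: the block--cutvertex tree lets us choose a bipartition of each block independently and then flip sides consistently along the tree. Hence $\gamma_2(G)=\sum_B\gamma_2(B)$ and there are many non-bipartite blocks in total weight. However, the blocks might be small, and the minimum degree condition need not pass to a block, so a direct application fails. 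Instead I would work with a more robust substructure.

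The concrete route: first find a subgraph $G'$ with high minimum degree relative to its order that is $2$-connected. Since $\delta(G)\ge(\theta_g+\eta)n$, every vertex has degree linear in $n$. I would remove vertices greedily to destroy small cuts. If $G$ has a cut vertex $v$, then each component of $G-v$ has more than $\delta(G)\ge(\theta_g+\eta)n$ vertices (a vertex in it has all its neighbours in it or $v$). So there are fewer than $1/\theta_g<g+1$ components, and the blocks are large. More usefully, a leaf block $B$ (containing at most one cut vertex) satisfies $\delta(B)\ge\delta(G)-1$, while $v(B)\le n$, so $\delta(B)\ge(\theta_g+\eta)n-1>\lfloor 2v(B)/(2g+1)\rfloor$ once $\eta n>1$. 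This is ensured by $n>(g+1)/\eta$. Thus the plan is to show some block $B$ of $G$ is non-bipartite and has minimum degree at least $\delta(G)-c$ for a small constant $c\le g$. Then Lemma~\ref{lem:haggkvist} applies to $B$, because $v(B)\ge\delta(G)\ge\theta_g n>\binom{g+1}{2}(2g+1)(3g-1)$ by the choice of $N_0$.

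To control the blocks, I would bound the number of cut vertices. Each block containing a non-cut vertex has size $>\delta(G)$, and a block--cutvertex tree analysis gives at most $g$ blocks and at most $g$ cut vertices, since the leaf blocks are disjoint apart from cut vertices and each has more than $\theta_g n$ vertices. Every block then has minimum degree at least $\delta(G)-g$: a vertex $x$ of $B$ loses neighbours only at cut vertices lying outside $B$, and its neighbours in other blocks can only be cut vertices. Now use $\gamma_2$. If all blocks were bipartite, $G$ would be bipartite, contradicting $\gamma_2(G)>0$. So some block $B$ is non-bipartite. Its minimum degree is at least $(\theta_g+\eta)n-g>2n/(2g+1)\ge\lfloor 2v(B)/(2g+1)\rfloor$, which holds since $\eta n>g+1$. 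Lemma~\ref{lem:haggkvist} then gives $C_{2g-1}\subseteq B\subseteq G$.

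The main obstacle I expect is justifying the block-count and degree-loss claim cleanly, including the case of bridges and the case where $G$ is disconnected. If $G$ is disconnected, each component has more than $\theta_g n$ vertices, so there are at most $g$ components. It is also odd that the hypothesis is the stronger $\gamma_2(G)>(g+1)n$ rather than just non-bipartiteness. This suggests the intended argument deletes up to $(g+1)n$ edges, for example edges at the few cut vertices, while preserving non-bipartiteness. So if my degree-loss bound fails, I would fall back on that: delete all edges at the at most $g+1$ cut vertices. This costs at most $(g+1)n$ edges, so $\gamma_2$ stays positive, and the resulting components are either $2$-connected or can be handled by iterating.
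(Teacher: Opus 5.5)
\paragraph{The gap: block minimum degree at cut vertices.} Your main line fails at the claim that every block $B$ of $G$ satisfies $\delta(B)\ge\delta(G)-g$, and at the earlier claim that a leaf block satisfies $\delta(B)\ge\delta(G)-1$. Your justification only covers non-cut vertices. A non-cut vertex lies in a unique block, so all its edges are in that block and it loses nothing. A cut vertex $c\in B$, however, can send almost all of its edges into the other blocks at $c$. For example, take two disjoint cliques of order about $n/2$ and a vertex $c$ joined to all of the second clique but to only two vertices of the first. This graph satisfies the degree hypothesis, yet the (leaf, non-bipartite) block formed by the first clique and $c$ has minimum degree $2$. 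So the hypothesis $\delta(B)>\lfloor 2v(B)/(2g+1)\rfloor$ of Lemma~\ref{lem:haggkvist} fails, and you cannot apply it to a non-bipartite block. (Your count of at most $g$ blocks is also wrong because of bridges, but that is harmless.) The natural repair is to discard cut vertices. The remaining piece then need not be $2$-connected, forcing further splitting, and it may even become bipartite, since all odd cycles can pass through the discarded vertex. This is exactly why the hypothesis is $\gamma_2(G)>(g+1)n$ rather than mere non-bipartiteness: discarding a vertex lowers $\gamma_2$ by less than $n$. The oddness you noticed was pointing at this flaw.

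\paragraph{What the fallback is missing.} Your fallback paragraph heads toward the paper's route, but it omits the quantitative step that makes it work: a bound on the total number of deletions over all iterations. The paper repeatedly takes a current component $H$ that is connected, non-bipartite and not $2$-connected. It deletes all edges at a cut vertex $v$ of $H$ and continues with the components of $H-v$. The current components are then the components of $G-D$, where $D$ is the set of discarded vertices, so every current vertex has lost at most $|D|$ neighbours. Suppose $g$ splits occurred. Right afterwards there would be at least $g+1$ disjoint components, each with minimum degree at least $(\theta_g+\eta)n-g>\theta_g n$. Each would then have more than $\theta_g n$ vertices, so together they would have more than $(g+1)\theta_g n>n$ vertices, which is impossible. (The paper gets the same bound through a depth argument with disjoint sibling components.) Hence fewer than $gn<\gamma_2(G)$ edges are deleted, so some terminal component $B$ is non-bipartite and therefore $2$-connected. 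It satisfies $\delta(B)\ge\delta(G)-g>\theta_g n\ge\theta_g v(B)$ and $v(B)>\theta_g n>\binom{g+1}{2}(2g+1)(3g-1)$, so Lemma~\ref{lem:haggkvist} applies. Without such a bound, the iteration could in principle delete more than $(g+1)n$ edges and erode the minimum degree. As written, your proposal does not establish the lemma.
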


\begin{proof}
By the definition of $N_0$, for every $n> N_0$, we have
\[
  (\theta_g+\eta)n-g\ge \theta_g n,
  \qquad
  \text{and}
  \qquad
  \frac{n}{2g+1}> \binom{g+1}{2}(2g+1)(3g-1).
\]
We decompose $G$ as follows. Start with the connected components of $G$,
and assign each of them depth $0$. A component is declared terminal if it is
bipartite, or if it is non-bipartite and $2$-connected. 
If a current component $H$ is not terminal, then $H$ is connected, non-bipartite, and not $2$-connected. We choose a cutvertex $v$ of $H$, delete all edges of $H$ incident with $v$, and continue with the components of $H-v$, each assigned depth one larger. 
We next prove that the splitting process has bounded depth.

\begin{claim}\label{cl:depth}
    Every component appearing in the process has depth at most $g$.
\end{claim}

\begin{poc}
    Suppose, to the contrary, that some branch contains $g+1$ splitting steps. Consider the parent components at these $g+1$ splitting steps. 
    A vertex contained in a component at depth $d$ can be adjacent to at most one cutvertex removed at each of the $d$ previous splitting steps. Hence it loses at most $d$ incident edges, and so $\delta(H)\ge \delta(G)-d$.
    Therefore, for each such parent component $H$, 
$$
\delta(H)\ge \delta(G)-g\ge (\theta_g+\eta)n-g>\theta_g n.
$$
At each of these $g+1$ splitting steps, there is a sibling component not lying on the chosen branch. Let $Q$ be such a sibling component, arising when a parent component $H$ is split at a cutvertex $v$. For any $x\in Q$, all neighbours of $x$ in $H$ lie either in $Q$ or are equal to $v$. Hence $|Q|\ge d_H(x)\ge \delta(H)>\theta_g n$. 

The sibling components obtained from the $g+1$ different splitting steps are pairwise disjoint. Hence $G$ contains more than
$(g+1)\theta_g n=\frac{2g+2}{2g+1}n$ vertices, a contradiction. This proves the claim.
\end{poc}

By Claim \ref{cl:depth}, every terminal component has depth at most $g$. Hence if $H$ is terminal, then $\delta(H)\ge \delta(G)-g>\theta_g n$. In particular, $|H|>\theta_g n$. Since the terminal components are pairwise disjoint, there are at most $\lfloor (2g+1)/2\rfloor= g$ of them.

Each splitting step replaces one current component by at least two smaller components.
Therefore the number of current components increases by at least one at each step. 
Since the process ends with at most $g$ terminal components, the total number of splitting steps is at most $g$.

At each splitting step we delete at most $n$ edges. Hence the total number of
deleted edges is at most $gn$.
If every terminal component were bipartite, then after deleting these edges the
graph $G$ would become a disjoint union of bipartite graphs. Therefore
$\gamma_2(G)\le gn < (g+1)n$,
a contradiction. Thus some terminal component
$B$ is non-bipartite. By construction, $B$ is $2$-connected.
Since $B$ has depth at most $g$, we have
$$
\delta(B)\ge \delta(G)-g
\ge (\theta_g+\eta)n-g
> \theta_g |B|.
$$
Moreover, $|B|\ge \theta_g n> \binom{g+1}{2}(2g+1)(3g-1)$. Hence
Lemma~\ref{lem:haggkvist} applies to $B$, and yields a copy of
$C_{2g-1}$ in $B$, and therefore in $G$.
\end{proof}

\section{Many short odd cycles from large $\gamma_2$}

In this section we combine the sampling lemma with the reduction lemma to show that, above the minimum-degree threshold, a graph with large $\gamma_2$ must contain many copies of $C_{2g-1}$.  For a positive integer $\ell$, let $N_\ell(G)$ denote the number of copies of $C_\ell$ in $G$.  

% We use the following sampling lemma of Gishboliner and Shapira ~\cite{GishbolinerShapira2020}. 
% As usual, integer rounding in the sample size is suppressed.
Since $\gamma_2(G)=e(G)-\operatorname{MaxCut}(G)$,
the usual notion that $G$ is $\nu$-far from bipartiteness is exactly
the condition $\gamma_2(G)\ge \nu v(G)^2$.
Thus Lemma 4.1 of Gishboliner--Shapira~\cite{GishbolinerShapira2020} can be stated as follows.

\begin{lemma}[Gishboliner--Shapira~\cite{GishbolinerShapira2020}]\label{Gishboliner-Shapira}
    There exists an absolute constant $c>0$ such that the following holds.
    Let $\nu\in(0,1)$, and let $H$ be an $n$-vertex graph satisfying $\gamma_2(H)\ge \nu n^2$,
    where $n\ge c\nu^{-5}$.
    Let $q=\lceil c\nu^{-5}\rceil$, and let $Q$ be chosen uniformly at random from
    $\binom{V(H)}{q}$. Then
    \[
        \Pr\left(
            \gamma_2(H[Q])\ge \frac{\nu}{2}q^2
        \right)
        \ge \frac{2}{3}.
    \]
\end{lemma}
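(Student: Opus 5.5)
Since the statement is Lemma~4.1 of Gishboliner--Shapira~\cite{GishbolinerShapira2020} in different notation, my first step is to check the dictionary between the two formulations. Their hypothesis is that $H$ is $\nu$-far from bipartite, meaning that at least $\nu n^2$ edges must be removed to make $H$ bipartite. For any partition $V(H)=X\cup Y$, the edges not crossing the partition are exactly $e(H[X])+e(H[Y])$. Minimising over partitions gives $\gamma_2(H)=e(H)-\operatorname{MaxCut}(H)$, which is the identity recorded just before the statement. So their hypothesis is precisely $\gamma_2(H)\ge \nu n^2$. Their conclusion, that the random sample of size $q=\Theta(\nu^{-5})$ is $(\nu/2)$-far from bipartite with probability at least $2/3$, is likewise precisely $\gamma_2(H[Q])\ge \frac{\nu}{2}q^2$. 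The condition $n\ge c\nu^{-5}$ only ensures that a $q$-subset exists. With this translation the lemma is just their result, and I would cite it.

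If one wanted a self-contained argument, I would not go through the edge-by-edge bipartiteness testers. Instead I would use the MaxCut sampling theorem of Alon, Fernandez de la Vega, Kannan and Karpinski. Write $\operatorname{MaxCut}$ for the maximum cut and $e(\cdot)$ for the number of edges. The plan has two estimates, each holding with probability close to $1$.

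\emph{Edge density.} The quantity $e(H[Q])/q^2$ is within $O(q^{-1/2})$ of $e(H)/n^2$. This follows from a second-moment computation, or from Azuma's inequality applied to the vertex-exposure martingale.

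\emph{Max-cut density.} The sampling theorem gives
\[
\operatorname{MaxCut}(H[Q])/q^2\le \operatorname{MaxCut}(H)/n^2+O(q^{-1/4}).
\]

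Subtracting the two estimates gives
\[
\frac{\gamma_2(H[Q])}{q^2}\ge \frac{\gamma_2(H)}{n^2}-O(q^{-1/4})\ge \nu-O(c^{-1/4}\nu^{5/4}).
\]
For $c$ a large absolute constant this is at least $\nu/2$, because $\nu^{5/4}\le\nu$ for $\nu<1$. The error probabilities of the two events can be made to sum to less than $1/3$.

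The main obstacle is the max-cut estimate. It is the upper bound on the sample's max cut, and it is the hard direction. The lower bound is easy: restricting an optimal cut of $H$ to $Q$ already gives a cut of the right expected size. The difficulty is that a small sample might admit a cut much better than the restriction of any global cut. Ruling this out is exactly what the AFKK theorem, or the Gishboliner--Shapira argument, achieves. It goes through a weak regularity (cut-norm) approximation of $H$ by a bounded-complexity weighted graph, together with the fact that cut-norm approximations are inherited by random samples. I would therefore treat this step as a black box and rely on the cited result. The remaining work is only checking the constants in the sample size $q=\lceil c\nu^{-5}\rceil$.
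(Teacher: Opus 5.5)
Your first paragraph is exactly what the paper does. The paper gives no independent proof: it records $\gamma_2(G)=e(G)-\operatorname{MaxCut}(G)$, notes that being $\nu$-far from bipartite is therefore precisely $\gamma_2(G)\ge \nu v(G)^2$, and restates Lemma~4.1 of Gishboliner--Shapira.

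Your optional self-contained sketch via the Alon--Fernandez de la Vega--Kannan--Karpinski max-cut sampling theorem is also sound, though it is not needed. Even if that theorem only gives error $O((\log q/q)^{1/4})$ rather than $O(q^{-1/4})$, the extra logarithm is absorbed by taking $c$ large.
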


\begin{lemma}[Hypergeometric Chernoff bound, \cite{JLR2000}]
\label{lem:chernoff}
For every $\xi>0$, there exists a constant $c=c(\xi)>0$ such that the
following holds.  Let $X$ be a hypergeometric random variable arising from a
sample of size $q$, and suppose that
$\mathbb E X\ge aq$
for some $a>0$.  Then
\[
  \Pr\left(X\le (a-\xi)q\right)\le e^{-cq}.
\]
\end{lemma}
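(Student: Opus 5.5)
The statement is the hypergeometric Chernoff bound, quoted from \cite{JLR2000}. We would prove it by reducing to the binomial case and then applying a standard exponential-moment (Chernoff) argument.

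\textbf{Setup.} Let $X$ count the marked elements in a uniformly random $q$-subset of an $N$-element population containing $M$ marked elements. Then $\mathbb E X = qM/N \ge aq$, so $p := M/N \ge a$.

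\textbf{Step 1: reduction to the binomial.} Let $Y \sim \mathrm{Bin}(q,p)$. Hoeffding's classical comparison gives $\mathbb E f(X) \le \mathbb E f(Y)$ for every convex $f$. Applying this with $f(x)=e^{-\lambda x}$ for $\lambda>0$ yields
\[
\mathbb E e^{-\lambda X} \le \bigl(1-p+pe^{-\lambda}\bigr)^q .
\]
Hoeffding's comparison is proved by realising the sample without replacement as a conditional expectation of a sample with replacement and then using Jensen's inequality.

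\textbf{Step 2: Markov's inequality.} From Step 1,
\[
\Pr\bigl(X\le (a-\xi)q\bigr) \le e^{\lambda(a-\xi)q}\,\bigl(1-p+pe^{-\lambda}\bigr)^q .
\]
Since $p\ge a$ and the right-hand side is decreasing in $p$, we may assume $p=a$. Optimising over $\lambda$ gives the Kullback--Leibler bound $e^{-q\,D(a-\xi\,\|\,a)}$.

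\textbf{Step 3: uniform constant.} Pinsker's inequality gives $D(a-\xi\,\|\,a)\ge 2\xi^2$, so we may take $c(\xi)=2\xi^2$, independent of $a$, $N$ and $M$. The degenerate case $a\le\xi$ is trivial: then $(a-\xi)q\le 0$, so the event is contained in $\{X=0\}$, or is empty when $a<\xi$. That case also follows directly from Hoeffding's bound $\Pr(X\le \mathbb E X-\xi q)\le e^{-2\xi^2 q}$.

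\textbf{Main obstacle.} The only non-routine step is the convex-order comparison in Step 1. Alternatively, one can cite Hoeffding's 1963 inequality for sampling without replacement directly, or \cite[Theorem 2.10]{JLR2000}, which asserts exactly that the binomial tail bounds apply to hypergeometric variables.
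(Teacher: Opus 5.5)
Your proof is correct, and it is essentially the paper's approach: the paper gives no argument of its own and simply cites \cite{JLR2000}, where Theorem 2.10 transfers the binomial Chernoff bounds to hypergeometric variables via the same Hoeffding convex-order comparison you use in Step 1. Your explicit constant $c(\xi)=2\xi^2$, which does not depend on $a$ or on the population, is exactly what the application in Lemma~\ref{lem:many-specified-odd-cycles} requires.
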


The next lemma is the main consequence of the sampling and reduction steps.

\begin{lemma}
\label{lem:many-specified-odd-cycles}
Fix $g\ge 2$ and $\varepsilon>0$, and let $\ell=2g-1$.
There exist constants $C_0=C_0(g,\varepsilon)>0$ and $A=A(g,\varepsilon)>0$ such that the following holds. Let $G$ be an $n$-vertex graph
satisfying
$\delta(G)\ge (\theta_g+\varepsilon)n$
and let $m=\gamma_2(G)$. Then either
$$
m\le C_0 n^{2-1/5} \qquad\text{or}\qquad N_\ell(G)\ge A
\left(\frac{m}{n^2}\right)^{5\ell} n^\ell.
$$
\end{lemma}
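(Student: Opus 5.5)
Set $\nu=m/n^2$ and use random sampling. If $m\le C_0n^{2-1/5}$ we are done. Otherwise $\nu\ge C_0 n^{-1/5}$, so $n\ge C_0^5\nu^{-5}$. Choosing $C_0$ large makes $n\ge c\nu^{-5}$, so Lemma~\ref{Gishboliner-Shapira} applies with $q=\lceil c\nu^{-5}\rceil$. We may assume $m>0$; since $\gamma_2(G)\le n^2/4$ we have $\nu\le 1/4$, so $q$ is at least a large constant $c\,4^5$. After enlarging $c$ if needed, this $q$ also exceeds any fixed threshold depending on $g,\varepsilon$ (using $\nu^{-5}$ large or simply raising $q$ by a constant factor; the lemma's conclusion is robust to this, or we just take $c$ large).

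Let $Q$ be a uniform random $q$-set. By Lemma~\ref{Gishboliner-Shapira}, with probability at least $2/3$ we have $\gamma_2(G[Q])\ge \frac{\nu}{2}q^2$. For the minimum degree, fix $x\in Q$. Conditioned on $x\in Q$, the number $d_{G[Q]}(x)$ is hypergeometric from a sample of size $q-1$ in $V\setminus\{x\}$. Its mean is at least $(\theta_g+\varepsilon)(q-1)-o(q)$, which is at least $(\theta_g+\varepsilon/2)q$ once $n$ and $q$ are large. Lemma~\ref{lem:chernoff} with $\xi=\varepsilon/4$ gives $\Pr(d_{G[Q]}(x)<(\theta_g+\varepsilon/4)q)\le e^{-c'q}$. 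A union bound over the $q$ vertices of $Q$ shows that $\delta(G[Q])\ge(\theta_g+\varepsilon/4)q$ fails with probability at most $qe^{-c'q}<1/3$ for $q$ large.

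Hence with probability at least $1/3$ both properties hold. We need $\gamma_2(G[Q])>(g+1)q$, that is, $\nu q/2>g+1$. Since $\nu q\ge c\nu^{-4}\ge c\,4^4$, this holds for $c$ large; if $c$ is absolute, we can instead take $q=\lceil c_1\nu^{-5}\rceil$ with $c_1\ge c$ large. Lemma~\ref{Gishboliner-Shapira} should still hold for larger sample sizes, and this is the main delicate point. I would handle it by applying Lemma~\ref{Gishboliner-Shapira} with a smaller $\nu'=\nu/K$: then $q=\lceil c\nu'^{-5}\rceil$ and the guarantee is $\gamma_2\ge \frac{\nu}{2K}q^2$, which still exceeds $(g+1)q$ and all size thresholds once $K=K(g,\varepsilon)$ is large. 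Lemma~\ref{lem:two-connected-reduction} with $\eta=\varepsilon/4$ and $q>N_0(g,\varepsilon/4)$ then shows that $G[Q]$ contains a $C_\ell$ with probability at least $1/3$.

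Finally we double count. By linearity of expectation, $\mathbb E[N_\ell(G[Q])]\ge 1/3$. Each copy of $C_\ell$ in $G$ lies in $Q$ with probability $\binom{n-\ell}{q-\ell}/\binom nq\le (q/n)^\ell$. Hence $N_\ell(G)(q/n)^\ell\ge 1/3$, so
\[
N_\ell(G)\ge \tfrac13 (n/q)^\ell\ge A\,\nu^{5\ell}n^\ell,
\]
using $q\le 2cK^5\nu^{-5}$ and choosing $A=A(g,\varepsilon)$ appropriately. The main obstacle is the bookkeeping that keeps $q$ large enough for the Chernoff union bound and for Lemma~\ref{lem:two-connected-reduction}, while retaining $q=O(\nu^{-5})$; the $\nu'=\nu/K$ trick resolves it.
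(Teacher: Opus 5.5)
Your proposal is correct and follows essentially the paper's proof. Your rescaling $\nu'=\nu/K$ is exactly the paper's application of Lemma~\ref{Gishboliner-Shapira} with $\nu=a\mu$ and $q=\lceil C_1a^{-5}\mu^{-5}\rceil$. It is then followed by the same hypergeometric minimum-degree inheritance, Lemma~\ref{lem:two-connected-reduction}, and the $(q/n)^\ell$ double count. The remaining differences are cosmetic: $\eta=\varepsilon/4$ versus $\varepsilon/2$, a union bound versus Markov, and absorbing the case $q>n$ into the choice of $C_0$. You should drop the earlier tentative remarks about enlarging the absolute constant $c$, since Lemma~\ref{Gishboliner-Shapira} is not stated to hold for larger sample sizes; the $\nu'$ trick is what makes that step rigorous.
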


\begin{proof}
If $m=0$, then the first alternative holds.  We may therefore assume that $m>0$.  Write $\mu=m/n^2$, since $\gamma_2(G)\le e(G)\le n^2/2$, we have $0<\mu\le 1/2$.

Let $C_1>0$ be the absolute constant given by Lemma \ref{Gishboliner-Shapira}, and let $C_2=C_2(\varepsilon / 4)>0$ be the constant given by Lemma \ref{lem:chernoff}. Choose $a \in(0,1]$ sufficiently small, and set
 $K=C_1a^{-5}$, so that
\begin{equation}\label{eq:L42-K-choices}
\begin{gathered}
K>\max\left\{N_0(g,\varepsilon/2),2(g+1),\frac{4(\theta_g+3\varepsilon/4)}{\varepsilon},\frac{g+1}{8a}\right\}\quad\text{and}\quad
xe^{-C_2(x-1)}\le \frac{1}{6} \quad\text{for every }x\ge K.
\end{gathered}
\end{equation}
Set $$C_0=2K^{1/5}
\quad\text{and}\quad
A=\frac{1}{2(2K)^\ell}.$$ 
Let $q=\lceil K\mu^{-5}\rceil$.
We first dispose of the case $q>n$.  In this case $K\mu^{-5}>n-1$, and hence
$$
m=\mu n^2<\left(\frac{K}{n-1}\right)^{1/5}n^2\le 2K^{1/5}n^{2-1/5}.
$$
Thus the first alternative holds.  Henceforth assume that $q\le n$.

Let $S\subseteq V(G)$ be a uniformly random $q$-set, and let $F=G[S]$.  
Since $\gamma_2(G)=\mu n^2 \geq a \mu n^2$ and $K \mu^{-5}=C_1(a \mu)^{-5}$,
Lemma~\ref{Gishboliner-Shapira}, applied with $\nu=a\mu$, gives
\begin{equation}\label{eq:L42-E1}
\mathbb P\left(\gamma_2(F)\ge a\mu q^2/2\right)\ge 2/3.
\end{equation}
Let $\mathcal E_1$ denote the event $\gamma_2(F)\ge a\mu q^2/2$.

Next we show that $F$ inherits the minimum-degree condition with high probability.  Fix $v\in V(G)$ and condition on $v\in S$.  Then $d_F(v)=|N_G(v)\cap(S\setminus{v})|$ is hypergeometric with sample size $q-1$, and
$$
\mathbb E[d_F(v)\mid v\in S]
=d_G(v)\frac{q-1}{n-1}
\ge (\theta_g+\varepsilon)(q-1).
$$
Since $q\ge K$, the choice of $K$ in \eqref{eq:L42-K-choices} gives
\begin{align*}
 (\theta_g+\varepsilon/2)q
\le
(\theta_g+3\varepsilon/4)(q-1).   
\end{align*}
Therefore Lemma~\ref{lem:chernoff}, applied with sample size $q-1$ and $\xi=\varepsilon/4$, gives
$$
\mathbb P\left(d_F(v)\le (\theta_g+\varepsilon/2)q\mid v\in S\right)\le e^{-C_2(q-1)}.
$$

Let $X$ be the number of vertices $v\in S$ with $d_F(v)\le(\theta_g+\varepsilon/2)q$.  
Then $\mathbb E X\le qe^{-C_2(q-1)}\le 1/6$ by \eqref{eq:L42-K-choices}, and Markov's inequality gives
\begin{equation}\label{eq:L42-E2}
\mathbb P\left(\delta(F)\ge(\theta_g+\varepsilon/2)q\right)\ge \frac56.
\end{equation}
Let $\mathcal E_2$ denote the event in \eqref{eq:L42-E2}.  Combining \eqref{eq:L42-E1} and \eqref{eq:L42-E2}, we obtain
\begin{equation}\label{eq:L42-good-events}
\mathbb P(\mathcal E_1\cap\mathcal E_2)\ge \frac23+\frac56-1=\frac12.
\end{equation}

Assume that $\mathcal E_1\cap\mathcal E_2$ holds.  Then $\delta(F)\ge(\theta_g+\varepsilon/2)q$ and $\gamma_2(F)\ge a\mu q^2/2$.  Moreover, by \eqref{eq:L42-K-choices}, we have
\begin{align*}
\gamma_2(F) \geq \frac{a \mu q}{2} q \geq \frac{a K}{2} \mu^{-4} q \geq 8 a K q>(g+1) q .
\end{align*}
 Also $q\ge K>N_0(g,\varepsilon/2)$.  Hence Lemma~\ref{lem:two-connected-reduction}, applied to $F$ with $\eta=\varepsilon/2$, implies that $F$ contains a copy of $C_{2g-1}=C_\ell$.  Together with \eqref{eq:L42-good-events}, this yields
 \begin{equation}\label{eq:L42-cycle-lower}
 \mathbb P(G[S]\text{ contains a copy of }C_\ell)\ge \frac{1}{2}.
 \end{equation}

It remains to convert this probability lower bound into a lower bound on $N_\ell(G)$.  A fixed copy of $C_\ell$ in $G$ is contained in $S$ with probability $\binom{n-\ell}{q-\ell}/\binom nq\le \big(\frac qn\big)^\ell$.
Hence, by the union bound,
\begin{equation}\label{eq:L42-cycle-upper}
\mathbb P(G[S]\text{ contains a copy of }C_\ell)
\le
N_\ell(G)\left(\frac qn\right)^\ell.
\end{equation}
Combining \eqref{eq:L42-cycle-lower} and \eqref{eq:L42-cycle-upper}, we get $N_\ell(G)\ge \frac12\big(\frac{n}{q}\big)^\ell$.  Finally, since $q=\lceil K\mu^{-5}\rceil$ and $K\mu^{-5}\ge 1$, we have
$q\le 2K\mu^{-5}$. Hence
$$
N_\ell(G)
\ge
\frac12\left(\frac nq\right)^\ell
\ge
\frac{1}{2(2K)^\ell}\mu^{5\ell}n^\ell=
A
\left(\frac{m}{n^2}\right)^{5\ell}n^\ell.
$$
This proves the second alternative, and hence the lemma.
\end{proof}

\section{Proof of Theorem~\ref{thm:main}}

We now finish the proof by comparing the lower bound on short odd cycles obtained in the previous section with a general upper bound of Alon and Shikhelman.  This comparison shows that a $C_{2g-1}[t]$-free graph above the threshold cannot have large $\gamma_2$.

For two graphs $F$ and $H$, we define  ${\rm ex}(n, F, H)$
 as the maximum possible number of copies of $F$ in an $H$-free graph on $n$ vertices. Alon and Shikhelman~\cite{alon2016many} proved the following result.

\begin{theorem}[\cite{alon2016many}]\label{thm:AS16}
Let $F$ be a graph on $f$ vertices. If $H$ is a subgraph of a blow-up of $F$, then there exists some $\alpha:=\alpha(F,H)>0$ such that 
\[
{\rm ex}(n, F, H) \leq n^{f-\alpha}.
\] 
\end{theorem}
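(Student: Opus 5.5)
The plan is the standard reduction to Erd\H{o}s' hypergraph extension of the K\H{o}v\'ari--S\'os--Tur\'an theorem. Write $V(F)=\{1,\dots,f\}$ and let $s=v(H)$. Since $H$ is a subgraph of some blow-up of $F$, it is a subgraph of the balanced blow-up $F[s]$. So it suffices to show that an $n$-vertex graph $G$ containing more than $n^{f-\alpha}$ copies of $F$ must contain $F[s]$.

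The key input is Erd\H{o}s' theorem on $f$-partite $f$-uniform hypergraphs. If such a hypergraph has vertex classes of size at most $n$ and contains no complete $f$-partite $K^{(f)}(s,\dots,s)$, then it has at most $c_{f,s}n^{f-s^{1-f}}$ edges. I will fix $\alpha$ with $0<\alpha<s^{1-f}$, taking it smaller still if needed for small $n$, as explained below.

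The steps, in order:
\begin{enumerate}
\item Encode the copies of $F$ as hypergraph edges. Take a uniformly random partition $V(G)=V_1\cup\dots\cup V_f$. Form the $f$-partite $f$-graph $\mathcal H$ whose edges are the tuples $(v_1,\dots,v_f)\in V_1\times\dots\times V_f$ such that $v_iv_j\in E(G)$ whenever $ij\in E(F)$.
\item Bound $e(\mathcal H)$ from below. Each copy of $F$ has at least one labelling $i\mapsto v_i$. That labelling is compatible with the random partition with probability $f^{-f}$. Hence some partition gives $e(\mathcal H)\ge f^{-f}n^{f-\alpha}$.
\item Find a complete $f$-partite subhypergraph. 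For $n\ge n_0(f,s,\alpha)$ we have $f^{-f}n^{f-\alpha}>c_{f,s}n^{f-s^{1-f}}$, since $\alpha<s^{1-f}$. So Erd\H{o}s' theorem gives sets $U_i\subseteq V_i$ with $|U_i|=s$ such that every transversal tuple of $U_1\times\dots\times U_f$ is an edge of $\mathcal H$.
\item Read off the blow-up. For each $ij\in E(F)$ and any $u\in U_i$, $w\in U_j$, some edge of $\mathcal H$ contains both $u$ and $w$, so $uw\in E(G)$. The classes $V_i$ are disjoint, so the sets $U_i$ are disjoint. Hence $U_1\cup\dots\cup U_f$ spans a copy of $F[s]\supseteq H$ in $G$, contradicting $H$-freeness.
\end{enumerate}

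It remains to handle the finitely many $n<n_0$. For $n=1$ there is nothing to prove. For $2\le n<n_0$ the number of copies of $F$ is at most $n(n-1)\cdots(n-f+1)<n^f$, assuming $f\ge2$, which holds whenever $F$ has an edge. So shrinking $\alpha$ further makes $n^{f-\alpha}$ exceed this count for all these $n$.

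The only substantial step is step 3, Erd\H{o}s' theorem itself. If it had to be proved rather than quoted, I would use induction on $f$. Apply convexity and the Cauchy--Schwarz inequality to the link $(f-1)$-graphs of $s$-subsets of $V_f$, so that some $s$-set has a common link with many edges, and then apply the induction hypothesis to that link. The remaining steps are bookkeeping.
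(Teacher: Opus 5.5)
The paper gives no proof of this statement; it quotes it from Alon and Shikhelman. Your argument is correct and is essentially the standard proof from that source: pass to $F[s]$ with $s=v(H)$, use a random $f$-partition so that copies of $F$ become consistently labelled edges of an $f$-partite $f$-graph, and apply Erd\H{o}s' bound for $K^{(f)}(s,\dots,s)$ to extract a blow-up (your separate handling of small $n$, which genuinely needs $f\ge 2$, is also fine and is harmless for the paper's application $F=C_\ell$).
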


\begin{proof}[\textbf{Proof of Theorem~\ref{thm:main}}]
Fix $g\ge 2$, $t\ge 1$, $\varepsilon>0$ and let $\ell=2g-1$.
Let $G$ be an $n$-vertex graph satisfying
$
  \delta(G)\ge
  \big(\theta_g+\varepsilon\big)n.
$
Suppose that $G$ is $C_\ell[t]$-free.  We shall prove that
$
  \gamma_2(G)\le Cn^{2-\rho}
$
for some $C>0$ and $\rho>0$.

Let
$
  m=\gamma_2(G).
$
By Lemma~\ref{lem:many-specified-odd-cycles}, applied with this $g$ and
$\varepsilon$, there exist constants $A=A(g,\varepsilon)>0$ and
$C_0=C_0(g,\varepsilon)>0$ such that either
\[
  m\le C_0 n^{2-1/5} \qquad \text{or} \qquad N_\ell(G)\ge
  A\left(\frac{m}{n^2}\right)^{5\ell}n^\ell.
\]
In the first case, the desired conclusion holds.
We may therefore assume that
\begin{align}\label{eq:lower}
      N_\ell(G)\ge
  A\left(\frac{m}{n^2}\right)^{5\ell}n^\ell.
\end{align}
We apply Theorem~\ref{thm:AS16} with $F=C_\ell$ and $H=C_\ell[t]$.
Hence there exists $\alpha=\alpha(\ell,t)>0$ such that
\begin{align}\label{eq:upper}
    N_\ell(G)\le n^{\ell-\alpha}.
\end{align}
Combining \eqref{eq:lower} and \eqref{eq:upper}, we get
\[
  A\left(\frac{m}{n^2}\right)^{5\ell}n^\ell
  \le
  n^{\ell-\alpha}.
\]
Hence
$\left(\frac{m}{n^2}\right)^{5\ell}\le A^{-1}n^{-\alpha}$.
It follows that
\[
  m
  \le
  A^{-1/(5\ell)}
  n^{2-\alpha/(5\ell)}.
\]
Now define $C=\max\{C_0,A^{-1/(5\ell)}\}$ and
$\rho=\min\left\{
    \frac1{5},
    \frac{\alpha}{5\ell}
  \right\}$.
Then in all cases,
$\gamma_2(G)=m\le Cn^{2-\rho}$.
This proves the theorem.
\end{proof}

\section{Concluding remarks}\label{sec:concluding}

\paragraph{Optimal exponent.}
Allen~\cite{allen2010dense} proved that if $\chi(H)=r+1$ and $\delta(G)>\big(\frac{3r-4}{3r-1}+\varepsilon\big)n$, 
then every $H$-free $n$-vertex graph can be made $r$-partite by deleting at
most $C\cdot\operatorname{biex}(n,H)$ edges, where $\operatorname{biex}(n,H)$ is
the Zarankiewicz-type extremal function coming from the bipartite graphs
induced by two colour classes of $H$.  In particular, his result determines
the correct order of the deletion distance up to a constant factor.  It is
natural to ask for an analogous description in the setting of Theorem~\ref{thm:3-chromatic}.

\begin{ques}
Fix a $3$-chromatic graph $H$  and $\varepsilon>0$, and let $g\ge2$ be the smallest integer such that $H\nrightarrow C_{2g+1}$. 
Determine the order of magnitude of
\[
M_{H,\varepsilon} (n)=
\max\{\gamma_2(G): v(G)=n, \delta(G)\ge(\theta_g+\varepsilon)n,~  H \not\subseteq G\}.
\]
\end{ques}

\section*{AI declaration}
All mathematical content is due to the author. ChatGPT Plus was used for proof checking and improving writing.

\section*{Acknowledgements}

The author thanks Hong Liu for bringing the result of {\L}uczak and Simonovits \cite{luczak2008minimum} to his attention.
The author also thanks an anonymous referee for bringing Lemma~\ref{Gishboliner-Shapira} to his attention.

\bibliographystyle{abbrv}
\bibliography{refs.bib}

\end{document}